\documentclass[11pt]{amsproc}
\usepackage{amsfonts,amsmath,amsthm}
\usepackage{amssymb}
\usepackage{url}
\usepackage{hyperref}
\usepackage[utf8]{inputenc}
\usepackage[english]{babel}
\usepackage{graphicx}

\setlength{\oddsidemargin}{0.1cm}
\setlength{\evensidemargin}{0.1cm}
\setlength{\textwidth}{175truemm}
\setlength{\textheight}{210truemm}

\newtheorem{theorem}{Theorem}[section]
\newtheorem{lemma}[theorem]{Lemma}

\newtheorem{proposition}[theorem]{Proposition}

\newtheorem{definition}[theorem]{Definition}

\theoremstyle{remark}

\newtheorem{example}[theorem]{Example}
\newtheorem{remark}[theorem]{Remark}

\def\K{\mathbb K}
\def\R{\mathbb R}
\def\C{\mathbb C}

\def\Epi{\operatorname{Epi}}

\def\norma{\|\cdot\|}

\title{A natural correspondence between quasiconcave functions and fuzzy norms}

\begin{document}

\author
{Javier~Cabello~Sánchez, Daniel~Morales~González}

\date{}

\thanks
{Departamento de Matem\'{a}ticas and Instituto de Matemáticas. 
Universidad de Extremadura,
Avda. de Elvas s/n, 06006, Badajoz, Spain \\
email:\ coco@unex.es (corresponding author), danmorg@unex.es}  

\thanks{Keywords: Fuzzy normed spaces; quasiconcave functions; 
Decomposition Theorem}

\thanks{Mathematics Subject Classification: 03E72, 26E50}

\begin{abstract}
In this note we show that the usual notion of fuzzy norm defined on a linear 
space is equivalent to that of quasiconcave function, in the sense that every 
fuzzy norm $N:X\times\mathbb{R}\to[0,1]$ defined on a (real or complex) linear space $X$ 
is uniquely determined by a quasiconcave function $f:X\to[0,1]$. We explore the 
minimum requirements that we need to impose to some quasiconcave function 
$f:X\to[0,1]$ in order to define a fuzzy norm $N:X\times\mathbb{R}\to[0,1]$. 

Later we use this equivalence to prove some properties of fuzzy norms, like 
a generalisation of the celebrated Decomposition Theorem. 
\end{abstract}

\maketitle

\section{Introduction}
Since the seminal paper \cite{bag2003finite}, where T.~Bag and S.K.~Samanta 
introduced their definition of fuzzy normed linear space and proved that, under 
some circumstances, a fuzzy norm can be seen as a collection of {\em usual} 
norms, a lot of work has been carried out in fuzzy analysis. Some results are 
essentially adaptations of classic results in functional analysis; see, e.g.,
\cite{Font2017, HBF1999}, while other deal with purely fuzzy 
concepts; see~\cite{MatrixNorm, Deco2020, QFN}. 

The goal in this note is to show that any fuzzy norm, in the sense of 
\cite{bag2003finite}, $N:X\times\mathbb{R}\to[0,1]$ defined on a 
(real or complex) linear space $X$ is uniquely determined by a 
quasiconcave function $f:X\to[0,1]$. It should be noted that the 
relation between both concepts is not difficult to prove and that in 
some sense it was noticed in~\cite{Ramik}, but there has been no further 
development, maybe due to the fact that~\cite{Ramik} appeared one year 
before~\cite{bag2003finite}. 
As this kind of functions (and specially their counterpart, the quasiconvex 
functions) has been thoroughly studied throughout the years, this equivalence 
may give rise to a great opportunity to begin taking advantage of tools developed 
in nonconvex optimization and see what can be obtained in fuzzy analysis 
-- see~\cite{Handbook} and, more precisely,~\cite{Crou} and the references therein. 

Later, in Section~\ref{Consequences}, we show that some results about the 
continuity of quasiconcave functions lead to the equivalence between 
fuzzy convergence of sequences and usual convergence of sequences in 
finite-dimensional spaces. We finish this paper proving a more general 
statement of the Decomposition Theorem that the reader may find 
in~\cite{bag2003finite}. 

\begin{definition}
Let $X$ be a non-empty set. A fuzzy set in $X$ is a function 
$\mu:X\to[0,1]$. 
\end{definition}

\begin{definition}[\cite{bag2003finite}]\label{defuzzy}
Let $X$ be a vector space over the field $\K=\R,\C$. A {\em fuzzy norm} 
on $X$ is a fuzzy set $N$ in $X\times[0,\infty)$ satisfying:
\begin{enumerate}
\item[(N1)] $N(x,0)=0$, for all $x\in X$;
\item[(N2)] $[N(x,t)=1$, for all $t>0]$ if and only if $x=0$;
\item[(N3)] $N(\lambda x,t)=N\left(x,\frac{t}{|\lambda|}\right)$, 
for all $t\geq 0$, $x\in X$ and $0\neq\lambda\in\K$;
\item[(N4)] $N(x+y,t+s)\geq \min\{N(x,t), N(y,s)\}$, for all $x,y\in X$ and $t,s\geq 0$;
\item[(N5)] For each $x\in X$, $\lim_{t\to\infty}N(x,t)=1$.
\end{enumerate}
We will say that the pair $(X,N)$ is a {\em fuzzy normed linear space} (briefly, {\em FNLS}).
\end{definition}

\begin{remark}
When defining the concept of fuzzy norm, instead of defining $N:X\times\R\to[0,1]$ 
and imposing that $N(x,t)=0$ for every $t<0$, we have chosen $N:X\times[0,\infty)\to[0,1]$. 
Of course, both definitions are equivalent. 
\end{remark}

Observe that for each $x\in X$, the function $N_x:[0,\infty)\rightarrow [0,1]$, 
$N_x(t)=N(x,t)$ is non-decreasing. Indeed, consider $t<s$, applying (N2) and (N4) we have
$$N_x(s)=N(x,s)\geq\min\{N(0,s-t), N(x,t)\}=N(x,t)=N_x(t).$$

\begin{example}[\cite{Font2017}]
Given a normed space $(X,\norma)$, the function 
$$N_{\norma}:X\times[0,\infty)\rightarrow [0,1]$$
defined by
$$N_{\norma}(x,t)=\dfrac{t}{t+\|x\|}, \qquad N_{\norma}(0,0)=0,$$
is a fuzzy norm on $X$ and is called the  {\em standard fuzzy norm.}
\end{example}

\begin{definition}
Given a (complex or real) linear space $X$, a function $f:X\rightarrow\R$ is 
said to be {\em quasiconcave} if for all $x,y\in X$ and $0<\lambda<1$, the 
following holds
$$ f(\lambda x+(1-\lambda)y)\geq\min\{f(x), f(y)\}. $$
\end{definition}

\section{The main result}

Throughout this section, $X$ will denote a vector space over the field 
$\K=\R,\C$, and we will denote by $\mathcal{F}$ the set of fuzzy norms on $X$.

Let $\mathcal{A}$ be the set of quasiconcave functions $f:X\rightarrow\R$ such that 
\begin{itemize}
\item[(A1)] $f(0)=1$ and, if $f(tx)=1$ for every $t$, then $x=0$;
\item[(A2)] $\lim_{t\to 0} f(tx)=1$ for every $x$;
\item[(A3)] $f(\lambda x)=f(|\lambda|x)$ for all $\lambda\in\K, x\in X$.
\end{itemize}

\begin{theorem}[Characterisation of fuzzy norms]\label{QCF}
The map 
$$\begin{array}{ccl}
\mathcal{A} & \longrightarrow & \mathcal{F}\\
f & \longmapsto & N_f(x,t)=
\begin{cases}
f\left(\frac{x}{t}\right) &\text{if}\quad t\neq 0;\\
0 &\text{if}\quad t=0;\\
\end{cases}
\end{array}$$
is bijective.
\end{theorem}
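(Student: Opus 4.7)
My plan is to prove the bijectivity in two stages: first, I verify that the proposed map is well defined, i.e.\ that $N_f\in\mathcal{F}$ whenever $f\in\mathcal{A}$; second, I exhibit an explicit two-sided inverse, namely $N\longmapsto f_N$ with $f_N(x):=N(x,1)$. This is cleaner than checking injectivity and surjectivity separately, and it shows that the correspondence is completely symmetric between the two languages.

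\textbf{Checking that $N_f\in\mathcal{F}$.} Axiom (N1) is built into the definition of $N_f$. For (N2), the condition $N_f(x,t)=1$ for all $t>0$ translates via the substitution $s=1/t$ into $f(sx)=1$ for all $s>0$, and then (A3) extends this to all $s\in\K$, so (A1) forces $x=0$; the converse uses $f(0)=1$. (N3) is an immediate rewriting of (A3). (N5) is (A2) after the same substitution $s=1/t$. The key step is (N4): for $t,s>0$ I use the identity
\[
\frac{x+y}{t+s}=\frac{t}{t+s}\cdot\frac{x}{t}+\frac{s}{t+s}\cdot\frac{y}{s},
\]
so that quasiconcavity of $f$ gives
\[
N_f(x+y,t+s)=f\!\left(\tfrac{x+y}{t+s}\right)\ge\min\!\left\{f\!\left(\tfrac{x}{t}\right),f\!\left(\tfrac{y}{s}\right)\right\}=\min\{N_f(x,t),N_f(y,s)\};
\]
the boundary cases $t=0$ or $s=0$ are trivial because $N_f$ vanishes there, making the right-hand side equal to $0$.

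\textbf{Constructing the inverse.} Given $N\in\mathcal{F}$, I set $f_N(x):=N(x,1)$ and check $f_N\in\mathcal{A}$. Quasiconcavity follows from (N4) with parameters $t=\lambda$, $s=1-\lambda$, together with (N3), which yields $N(\lambda x,\lambda)=N(x,1)=f_N(x)$ (and likewise for $y$) whenever $\lambda\in(0,1)$. For (A1): $f_N(0)=1$ by (N2), and if $f_N(tx)=1$ for every $t$, applying (N3) for $t\neq 0$ turns this into $N(x,s)=1$ for every $s>0$, whence $x=0$ by (N2). Axiom (A2) is (N5) together with (N3), and (A3) is direct from (N3). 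It remains to verify that the two maps are mutually inverse: for $t>0$,
\[
N_{f_N}(x,t)=f_N\!\left(\tfrac{x}{t}\right)=N\!\left(\tfrac{x}{t},1\right)=N(x,t)
\]
by (N3), and conversely $f_{N_f}(x)=N_f(x,1)=f(x)$ directly from the definition. Both equalities also hold at the excluded points by construction.

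\textbf{Where the difficulty lies.} None of the individual verifications is deep; the calculations are essentially routine. The main care-points are (i) spotting the convex-combination identity that converts (N4) into quasiconcavity, which is really the substantive content of the theorem, and (ii) keeping track of the absolute value in (A3) versus the modulus in (N3) so that the translations on $\lambda=0$, $t=0$ and $x=0$ boundary cases line up. Once these are handled carefully, bijectivity follows with no further input.
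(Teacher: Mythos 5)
Your proof is correct and follows essentially the same route as the paper: the same convex-combination identity converts (N4) into quasiconcavity, and the inverse candidate $f_N(x)=N(x,1)$ is exactly the paper's. You are in fact slightly more complete, since you explicitly verify $N_{f_N}=N$ and $f_{N_f}=f$ (and the $t=0$, $s=0$ boundary cases of (N4)), whereas the paper leaves the two-sided-inverse check implicit.
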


\begin{proof}
We will show that given $f\in\mathcal{A}$, $N_f$ is a fuzzy norm on $X$ 
--it is clear that if the map is well defined, then it is injective:
\begin{enumerate}
\item[(N1)] $N_f(x,0)=0$, for all $x\in X$ by the definition of $N_f$. 
\item[(N2)] $N_f(0,t)=f\left(\frac{0}{t}\right)=1$, for all $t>0$.
If $x\neq 0$, then (A1) implies that there exists $t\in\R$ such that 
$N_f(x,t)=f\left(\frac{x}{t}\right)\neq 1$. 
\item[(N3)] $N_f(\lambda x,t)=f\left(\frac{\lambda x}{t}\right)=f\left(\frac{|\lambda| x}{t}\right)=
N_f\left(x,\frac{t}{|\lambda|}\right)$, for all $t\geq 0$, $x\in X$ and $0\neq\lambda\in\K$. 
\item[(N4)] For every $x,y\in X$ and $t,s\geq 0$
\begin{eqnarray*}
N_f(x+y,t+s)&=& f\left(\frac{x+y}{t+s}\right)\\
&=& f\left(\frac{t}{t+s}\cdot\frac{x}{t}+\frac{s}{t+s}\cdot\frac{y}{s}\right)\\
&\stackrel{(*)}\geq & \min\left\lbrace f\left(\frac{x}{t}\right),f\left(\frac{y}{s}\right)\right\rbrace\\
&=& \min\{N_f(x,t), N_f(y,s)\};
\end{eqnarray*}
where the inequality $\stackrel{(*)}\geq$ holds because $f$ is quasiconcave. 
\item[(N5)] Given $x\in X$, $\lim_{t\to\infty}N_f(x,t)=\lim_{t\to\infty}
f\left(\frac{x}{t}\right)=\lim_{t\to 0}f(tx)=1$. 
\end{enumerate}

\medskip

Now we need to show that, given $N\in\mathcal{F}$, the function $f_N(x)=N(x,1)$ 
belongs to $\mathcal{A}$ --with this we have that the map is a surjection.
\begin{itemize}
\item[(A0)] $f_N$ is quasiconcave. Indeed, for every $x,y\in X,\lambda\in[0,1]$, 
one has:
\begin{eqnarray*}
f_N(\lambda x+(1-\lambda)y)&=& N(\lambda x+(1-\lambda)y,1)\\
&\geq & \min\{N(\lambda x,\lambda),N((1-\lambda)y,1-\lambda)\}\\
&=& \min\{N(x,1),N(y,1)\}\\
&=& \min\{f_N(x),f_N(y)\}. 
\end{eqnarray*}
\item[(A1)] $f_N(0)=N(0,1)=1$ and, if $x\neq 0,$ then there exists $t$ such that 
$f_N(x)=N(x,1)\neq 1$ by (N2). 
\item[(A2)] $\lim_{t\to 0} f_N(tx)=\lim_{t\to 0} N(tx,1)=\lim_{t\to \infty} N(x,t)=1$ by (N5). 
\item[(A3)] $f_N(\lambda x)=N(\lambda x,1)=N(|\lambda| x,1)=f_N(|\lambda| x)$ by (N3).
\end{itemize}
So, both concepts are equivalent. 
\end{proof}

\section{Applications}\label{Consequences}
In this section we analyse some consequences of the equivalence of quasiconcave
functions and fuzzy norms. First we show that the only topology defined by 
fuzzy norms in finite-dimensional spaces is the topology given by any norm. Later we give a generalisation of the Decomposition Theorem. 

\subsection{Finite-dimensional spaces}
In this subsection, we will show some results that improve the ones that 
can be found in~\cite[Section 3]{sadeqi} for the case of finite-dimensional 
spaces. Namely, while in ~\cite[Section 3]{sadeqi} we can find results on 
infinite-dimensional spaces, all of them depend on the Conditions (N6) and (N7). 

\begin{itemize}
\item[(N6)] $[\forall t>0, N(x, t)>0]$ implies $x=0$.
\item[(N7)] For any non-zero element $x$, $N(x,\,.\,)$ is a continuous function on $\R$ and 
strictly increasing on $\{t:0<N(x,t)<1\}$. 
\end{itemize}

We will use the following results, stated as Proposition 3.8 and Theorem 3.2 
in~\cite{Crou} for quasiconvex functions. Please take into account that a 
function $f$ is quasiconvex if and only if $-f$ is quasiconcave, so 
in~\cite{Crou}, the inequality that appears in the next Proposition is reversed. 

\begin{proposition}[\cite{Crou}]\label{continuaa}
Assume that $f:E\to \R$ is quasiconcave, $a,b\in E$, $f(b)>f(a)$ and $f$ is 
continuous at $b$. Then $f$ is continuous at $a$ whenever 
$t\mapsto f(a+t(a-b))$ is continuous at $t=0$. 
\end{proposition}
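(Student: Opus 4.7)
The plan is to prove that $f$ is both lower and upper semicontinuous at $a$, using in each case a different side of the hypothesized continuity of the one-dimensional function $g(t):=f(a+t(a-b))$ at $t=0$. A preliminary observation used throughout: since $a=\tfrac{1}{1+t}\bigl(a+t(a-b)\bigr)+\tfrac{t}{1+t}\,b$ is a convex combination, quasiconcavity combined with $f(b)>f(a)$ forces $f(a+t(a-b))\leq f(a)$ for every $t>0$.

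For lower semicontinuity, fix $\varepsilon>0$. By the preceding observation together with right-continuity of $g$ at $0$, I choose $t>0$ small enough that $c:=a+t(a-b)$ satisfies $f(c)\geq f(a)-\varepsilon/2$. Now for $x$ in a neighborhood of $a$, define
\[
b':=b+\tfrac{1+t}{t}(x-a).
\]
A direct computation shows $x=\tfrac{1}{1+t}c+\tfrac{t}{1+t}b'$, and clearly $b'\to b$ as $x\to a$, so by continuity of $f$ at $b$ I can shrink the neighborhood of $a$ to guarantee $f(b')>f(a)-\varepsilon$. Quasiconcavity of $f$ then delivers
\[
f(x)\geq\min\{f(c),f(b')\}\geq f(a)-\varepsilon,
\]
as required.

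For upper semicontinuity I argue by contradiction. Suppose that for some $\varepsilon\in(0,f(b)-f(a))$ one has $a\in\overline{C}$, where $C:=\{y:f(y)\geq f(a)+\varepsilon\}$. Quasiconcavity makes $C$ convex, and continuity of $f$ at $b$ together with $f(b)>f(a)+\varepsilon$ puts $b$ in the interior of $C$. The classical fact that, in a topological vector space, $b\in\operatorname{int}(C)$ and $a\in\overline{C}$ imply $(1-s)a+sb\in\operatorname{int}(C)$ for every $s\in(0,1]$ then gives $g(t)\geq f(a)+\varepsilon$ for all $t\in[-1,0)$, contradicting left-continuity of $g$ at $0$, where $g(0)=f(a)$.

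The delicate step is the lower-semicontinuity argument: the trick is to keep the ``good'' point $c$ fixed while absorbing the whole perturbation $x-a$ into a displacement of the second endpoint $b\mapsto b'$, so that continuity of $f$ at $b$ (rather than at the a priori uncontrolled point $c$) is all that is needed. The upper-semicontinuity step, by contrast, is a clean application of the standard interior/closure result for convex sets.
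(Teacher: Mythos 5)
Your argument is correct, and there is nothing in the paper to compare it against: the proposition is imported from \cite{Crou} (with the quasiconvex inequality reversed) and the paper gives no proof of it. Checking your two halves: the preliminary observation that $a=\frac{1}{1+t}\bigl(a+t(a-b)\bigr)+\frac{t}{1+t}b$ forces $f(a+t(a-b))\leq f(a)$ for $t>0$ is right; the lower-semicontinuity step, where you keep $c=a+t(a-b)$ fixed and push the whole perturbation $x-a$ onto the second endpoint via $b'=b+\frac{1+t}{t}(x-a)$ so that only continuity at $b$ is invoked, is the key idea and it works; and the upper-semicontinuity step correctly combines convexity of the superlevel set $C=\{f\geq f(a)+\varepsilon\}$, the fact that continuity at $b$ places $b$ in $\operatorname{int}(C)$, and the standard segment lemma ($b\in\operatorname{int}(C)$, $a\in\overline{C}$ imply $(1-s)a+sb\in\operatorname{int}(C)$ for $s\in(0,1]$) to contradict left-continuity of $g$ at $0$. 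The one hypothesis you use silently is that $E$ is a topological vector space: you need continuity of $x\mapsto b+\frac{1+t}{t}(x-a)$ to get $b'\to b$, and the segment lemma also lives in that setting. Since the paper only applies the proposition in $\R^n$ (and \cite{Crou} works in that generality anyway), this is harmless, but it should be stated.
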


\begin{theorem}[\cite{Crou}]\label{continua0}
Assume that $f:\R^n\to\R$ is quasiconcave. Then, $f$ is continuous at $a$ 
if and only if $t\mapsto f(a+tb)$ is continuous at $t=0$ for every $b\in\R^n$. 
\end{theorem}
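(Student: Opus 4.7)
The plan is to prove both directions of the equivalence, with the hard work lying in the backward direction. The forward implication is immediate, since $t\mapsto f(a+tb)$ is the composition of the continuous map $t\mapsto a+tb$ with $f$. For the substantive direction I assume that $t\mapsto f(a+tb)$ is continuous at $t=0$ for every $b\in\R^n$ and prove upper and lower semi-continuity of $f$ at $a$ separately, in each case by contradiction, exploiting the convexity of the super-level sets $S_c=\{x\in\R^n:f(x)\geq c\}$ granted by quasiconcavity.

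For lower semi-continuity I suppose a sequence $x_n\to a$ satisfies $\liminf f(x_n)<f(a)$ and pick $c$ strictly between these values. Passing to a subsequence, $x_n\notin S_c$ for every $n$ while $a\in S_c$, so $a$ is not an interior point of the convex set $S_c$. If $S_c$ has non-empty interior in $\R^n$, then $a\in\partial S_c$ and a supporting hyperplane at $a$ provides a direction $b$ with $a+tb\notin S_c$ for all $t>0$; if $S_c$ has empty interior, it is contained in a proper affine subspace through $a$, and any $b$ transverse to that subspace satisfies $a+tb\notin S_c$ for $t\neq 0$. Either way, $f(a+tb)<c<f(a)$ for small $t>0$, contradicting continuity of $t\mapsto f(a+tb)$ at $t=0$.

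For upper semi-continuity I suppose $x_n\to a$ with $\limsup f(x_n)>f(a)$ and pick $c$ strictly between these values. Then $x_n\in S_c$ eventually, so $a\in\overline{S_c}$ while $a\notin S_c$. Being a non-empty convex subset of $\R^n$, $S_c$ has non-empty relative interior, and I fix any $y\in\operatorname{relint}(S_c)$. A standard fact of convex analysis is that $(1-t)a+ty\in\operatorname{relint}(S_c)$ for every $t\in(0,1]$, so $f((1-t)a+ty)\geq c$ on $(0,1]$. Setting $b=y-a$, the function $g(t)=f(a+tb)$ satisfies $g(0)=f(a)<c\leq g(t)$ for $t\in(0,1]$, again contradicting continuity of $g$ at $t=0$.

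The main obstacle is the convex-geometric dichotomy between $S_c$ having or lacking a topological interior; this is precisely where the finite-dimensional hypothesis is genuinely needed, and both subcases must be treated (both are standard). A natural alternative would be to bypass the upper semi-continuity case by appealing to Proposition~\ref{continuaa}: it would suffice to exhibit a single $b$ with $f(b)>f(a)$ at which $f$ is already known to be continuous, since line continuity at $a$ in the direction $a-b$ would then finish the job. However, producing such a point of continuity seems to demand essentially the same convex-geometric input as the direct argument, so I would stay with the super-level-set approach for both halves of semi-continuity.
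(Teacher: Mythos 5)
The paper does not prove this statement at all: it is imported verbatim from the cited reference \cite{Crou} (Crouzeix), so there is no in-paper argument to compare against. Judged on its own terms, your proof is correct and complete. The forward direction is indeed trivial. In the backward direction, the split into lower and upper semicontinuity is the right decomposition, and each half is handled by a sound convex-analytic argument: for lower semicontinuity, the dichotomy between $\operatorname{int}S_c\neq\emptyset$ (supporting hyperplane at $a$, yielding a ray leaving $S_c$) and $\operatorname{int}S_c=\emptyset$ ($S_c$ lies in a proper affine subspace through $a$, so any transverse direction works) correctly covers all cases and is genuinely needed, since $a$ could be in the relative interior of a lower-dimensional $S_c$; for upper semicontinuity, the line-segment principle $(1-t)a+ty\in\operatorname{relint}(S_c)$ for $y\in\operatorname{relint}(S_c)$ and $a\in\overline{S_c}$ is exactly the standard tool, and nonemptiness of the relative interior is where finite-dimensionality enters. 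Your closing remark is also apt: reducing to Proposition~\ref{continuaa} would require first producing a point of continuity with larger value, which is not cheaper than the direct level-set argument.
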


\begin{definition}[\cite{bag2005bounded}, p. 536]
For $\K=\R$ or $\C$ we will denote as $N_\K$ the fuzzy norm in $\K$ defined as 
\begin{equation}\label{absval}
N_\K(x,t)=
\left\{
\begin{array}{c}
0 \text{ if } t\leq |x| \\
1 \text{ if } t>|x|
\end{array}
\qquad
\right.
\end{equation}
\end{definition}

\begin{definition}[\cite{bag2005bounded}, p. 524]
A mapping $T$ from $(U, N_1)$ to $(V, N_2)$ is said to be fuzzy continuous at 
$x_0\in U$ if for every $t>0, \alpha\in(0,1)$ there exist $s>0, \beta\in(0,1)$ 
such that 
$$N_1(x-x_0,s)>\beta \Rightarrow N_2(T(x)-T(x_0),t)>\alpha.$$
\end{definition}

\begin{definition}[\cite{bag2005bounded}, p. 524]
A mapping $T$ from $(U, N_1)$ to $(V, N_2)$ is said to be sequentially fuzzy 
continuous at $x_0\in U$ if 
\begin{equation}\label{seqcon}
\lim_{n\to\infty}N_1(x_n-x_0,t)=1,\forall\,t>0\Rightarrow 
\lim_{n\to\infty}N_2(T(x_n)-T(x_0),t)=1,\forall\,t>0. 
\end{equation}
\end{definition}

\begin{theorem}[\cite{bag2005bounded}, Theorem~3.2]\label{thmfscc}
Let $T:(U,N_1)\to (V,N_2)$ be a mapping where $(U,N_1)$ and $(V,N_2)$ are FNLS. 
Then T is fuzzy continuous iff it is sequentially fuzzy continuous.
\end{theorem}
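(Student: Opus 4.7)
The plan is to prove both implications directly from the definitions: the forward direction is an unpacking of quantifiers, while the reverse uses the contrapositive together with the monotonicity of $N_1(x,\cdot)$ in the second argument observed right after Definition~\ref{defuzzy}.

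For \emph{fuzzy continuous implies sequentially fuzzy continuous}, assume $T$ is fuzzy continuous at $x_0$ and that $\lim_n N_1(x_n-x_0,s)=1$ for every $s>0$. Fix $t>0$; to see that $\lim_n N_2(T(x_n)-T(x_0),t)=1$, pick any $\alpha\in(0,1)$. By fuzzy continuity there exist $s>0$ and $\beta\in(0,1)$ such that $N_1(x-x_0,s)>\beta$ forces $N_2(T(x)-T(x_0),t)>\alpha$. Since $N_1(x_n-x_0,s)\to 1$, eventually $N_1(x_n-x_0,s)>\beta$, hence eventually $N_2(T(x_n)-T(x_0),t)>\alpha$. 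As $\alpha\in(0,1)$ is arbitrary and $N_2\leq 1$, this yields $N_2(T(x_n)-T(x_0),t)\to 1$.

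For the converse I argue by contrapositive. Suppose $T$ is \emph{not} fuzzy continuous at $x_0$: there exist $t_0>0$ and $\alpha_0\in(0,1)$ such that for every $s>0$ and every $\beta\in(0,1)$ some $x\in U$ satisfies both $N_1(x-x_0,s)>\beta$ and $N_2(T(x)-T(x_0),t_0)\leq\alpha_0$. Apply this with $s=1/n$ and $\beta=1-1/n$ to extract a sequence $(x_n)$ with $N_1(x_n-x_0,1/n)>1-1/n$ and $N_2(T(x_n)-T(x_0),t_0)\leq\alpha_0$. Using that $N_1(x_n-x_0,\cdot)$ is non-decreasing, for any fixed $s>0$ and every $n\geq 1/s$ we get $N_1(x_n-x_0,s)\geq N_1(x_n-x_0,1/n)>1-1/n$, so $N_1(x_n-x_0,s)\to 1$ for \emph{every} $s>0$. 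On the other hand $N_2(T(x_n)-T(x_0),t_0)\leq\alpha_0<1$ does not converge to $1$, contradicting sequential fuzzy continuity of $T$.

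The proof is essentially routine; the only small subtlety is the diagonal coupling $s=1/n$, $\beta=1-1/n$ in the contrapositive, which produces a single sequence that simultaneously witnesses the hypothesis of sequential fuzzy continuity for \emph{every} $s>0$. The monotonicity of $N_1$ in the second variable is exactly what allows one sequence to serve all values of $s$ at once, so no further technical obstacle arises.
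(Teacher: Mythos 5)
Your proof is correct. Note that the paper does not actually prove this statement --- it is imported verbatim as Theorem~3.2 of the cited reference --- so there is no in-paper argument to compare against; your two-step argument (quantifier unpacking for the forward direction, and for the converse a contrapositive with the diagonal choice $s=1/n$, $\beta=1-1/n$ combined with the monotonicity of $N_1(x,\cdot)$ noted after Definition~\ref{defuzzy}) is the standard and expected one. The only cosmetic slip is that at $n=1$ your $\beta=1-1/n=0$ lies outside $(0,1)$; start the extraction at $n=2$ or take $\beta=1-1/(n+1)$.
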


With these definitions and results in mind, we can obtain the following. 

\begin{lemma}
Let $(X,N)$ be a finite-dimensional FNLS. Then, 
$N(\,\cdot\,,t_0):X\to\R$ is continuous at $x=0$ for every $t_0$.
\end{lemma}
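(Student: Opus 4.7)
The plan is to exploit the bijection established in Theorem~\ref{QCF}: the fuzzy norm $N$ corresponds to the quasiconcave function $f_N\in\mathcal{A}$ defined by $f_N(x)=N(x,1)$, and $N(x,t)=f_N(x/t)$ for $t>0$. Since $X$ is finite-dimensional, we may identify it with $\R^n$ via any linear isomorphism (this is a homeomorphism for any norm topology on $X$, which is the setting in which continuity is to be understood), and then apply the continuity criterion for quasiconcave functions on $\R^n$ given by Theorem~\ref{continua0}.

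For $t_0=0$ the map $N(\,\cdot\,,0)$ is identically zero, so continuity is trivial. For $t_0>0$, the map $x\mapsto x/t_0$ is a linear homeomorphism, so continuity of $N(\,\cdot\,,t_0)=f_N(\,\cdot\,/t_0)$ at $0$ is equivalent to continuity of $f_N$ at $0$. Hence the whole statement reduces to showing that $f_N$ is continuous at $0\in X\cong\R^n$.

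To apply Theorem~\ref{continua0} at the point $a=0$, I need to check that, for every $b\in X$, the one-variable map $t\mapsto f_N(tb)$ is continuous at $t=0$. Property (A2) of $f_N$ gives exactly $\lim_{t\to 0} f_N(tb)=1=f_N(0)$, which is precisely this continuity. Theorem~\ref{continua0} then yields continuity of $f_N$ at $0$, and pulling back via the scaling $x\mapsto x/t_0$ finishes the proof.

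The only genuine subtlety is in the opening identification step: one must first pass from the abstract fuzzy norm $N$ to the quasiconcave function $f_N$ via Theorem~\ref{QCF}, and then invoke the \emph{finite-dimensional} continuity theorem for quasiconcave functions; everything else is a direct verification that (A2) is exactly the hypothesis needed by Theorem~\ref{continua0} at the origin. No difficult estimate appears, since the hard analytic work has already been packaged into Theorem~\ref{continua0}.
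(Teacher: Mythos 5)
Your proof is correct and follows essentially the same route as the paper: both reduce to continuity of $f_N$ at $0$ via (A2) and Theorem~\ref{continua0}, then handle general $t_0>0$ by the scaling $x\mapsto x/t_0$ and dispose of $t_0=0$ trivially. The identification of $X$ with $\R^n$ that you make explicit is left implicit in the paper, but is the same step.
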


\begin{proof}
Consider the quasiconcave  function $f:X\to[0,1]$ associated to $N$.
Conditions (A2) and (A1) give
$\displaystyle\lim_{t\to 0} N(tx,1)= \lim_{t\to 0} f(tx)=1=f(0)$
for every $x$, so Theorem~\ref{continua0} implies that $f$ is continuous at 0.
Equivalently, that $N(\,\cdot\,,1)$ is. For other positive $t_0$ we also have
$$\lim_{x\to 0}N(x,t_0)=\lim_{x\to 0}f(x/t_0)=1,$$
so $N(\,\cdot\,,t_0)$ is continuous at 0 for every $t_0>0$. When $t_0=0$ the
statement is trivial, so it is continuous for every $t_0\geq 0.$
\end{proof}

\begin{proposition}\label{sequence}
Let $(X,N)$ be a finite-dimensional FNLS and consider a sequence 
$(x_n)_n\subset X$. One has 
$$\lim_{n\to \infty}N(x_n,t)=1,\,\forall\, t>0\ \Longleftrightarrow\  
\lim_{n\to \infty}x_n=0.$$
\end{proposition}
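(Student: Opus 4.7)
My plan is to pass to the associated quasiconcave $f=f_N\in\mathcal{A}$ via Theorem~\ref{QCF}, so that $f(x)=N(x,1)$ and the hypothesis $N(x_n,t)\to 1$ for every $t>0$ is equivalent to $f(sx_n)\to 1$ for every $s>0$ (set $s=1/t$). Throughout I fix any norm $\norma$ on the finite-dimensional space $X$. The reverse implication is immediate: if $x_n\to 0$ in $\norma$, the preceding lemma gives that $N(\,\cdot\,,t)$ is continuous at $0$ for every $t\geq 0$, so $N(x_n,t)\to N(0,t)=1$ for every $t>0$.

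For the direct implication I argue by contradiction. Assume $x_n\not\to 0$; after passing to a subsequence I may suppose $\|x_n\|\geq\delta>0$. Using that $X$ is finite-dimensional I distinguish two situations. If $(x_n)$ is bounded, Bolzano--Weierstrass gives a subsequence with $x_n\to y$, $\|y\|\geq\delta$, and I set $y_n:=x_n$. If instead $\|x_n\|\to\infty$, I normalise $y_n:=x_n/\|x_n\|$ and extract, by compactness of the unit sphere, a subsequence with $y_n\to y$, $\|y\|=1$. In this unbounded situation, for each fixed $s>0$ the scalar $s/\|x_n\|$ lies in $[0,1]$ eventually, so applying quasiconcavity to the convex combination $sy_n=(s/\|x_n\|)x_n+(1-s/\|x_n\|)\cdot 0$ yields
$$f(sy_n)\geq\min\{f(x_n),f(0)\}=f(x_n)\to 1,$$
so again $f(sy_n)\to 1$ for every $s>0$.

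In both cases I now have $y_n\to y\neq 0$ with $f(sy_n)\to 1$ for all $s>0$, and I aim at a contradiction. The function $g(s):=f(sy)$ is non-increasing on $[0,\infty)$ (apply quasiconcavity to $s_1y=(s_1/s_2)(s_2y)+(1-s_1/s_2)\cdot 0$ for $0<s_1<s_2$), and (A1) together with (A3) produces some $s^*>0$ with $g(s^*)<1$. Since monotone functions have only countably many discontinuities, I pick a continuity point $s_0>s^*$ of $g$; then $g(s_0)<1$. Now Proposition~\ref{continuaa} applies with $a=s_0y$ and $b=0$: indeed $f(b)=1>f(a)=g(s_0)$, $f$ is continuous at $b=0$ by the preceding lemma, and $t\mapsto f(a+t(a-b))=g((1+t)s_0)$ is continuous at $t=0$ by the choice of $s_0$. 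Hence $f$ is continuous at $s_0y$, so $f(s_0y_n)\to f(s_0y)<1$, contradicting $f(s_0y_n)\to 1$. The main obstacle is the unbounded subcase: $(x_n)$ itself may have no convergent subsequence, so we must first pass to the sphere and use the quasiconcavity bound above to preserve the limits $f(sy_n)\to 1$ after normalisation, before the continuity criterion of Proposition~\ref{continuaa} can be invoked.
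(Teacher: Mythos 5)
Your proof is correct and follows essentially the same route as the paper's: the same bounded/unbounded dichotomy, the same use of quasiconcavity to rescale the unbounded subsequence onto a bounded set, and the same combination of the continuity-at-$0$ lemma, the almost-everywhere continuity of the monotone map $s\mapsto f(sy)$, and Proposition~\ref{continuaa}. Your finish is in fact slightly cleaner: by picking the continuity point $s_0$ beyond a point $s^*$ where $f(s^*y)<1$ you reach the contradiction directly, whereas the paper first derives $N(y,t)=1$ for $t\geq 2$ and then needs an extra rescaling step to contradict (N2).
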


\begin{proof}
If we have $(x_n)_n\subset X$ such that 
$\lim_{n\to\infty}N(x_n,t)=1,\,\forall\,t>0,$ then $(x_n)_n\to 0$ with respect
to the only linear topology with which we can endow $X$. Indeed, suppose on the
contrary that $(x_n)_n$ does not converge to 0. Substituting $t$ by 1,
we have $(f(x_n))_n\to 1$. Now we have two options; either $(x_n)_n$ is bounded,
in which case it has an accumulation point $y\in X$, or it is unbounded.

In the first case, passing if necessary to a subsequence, we may suppose that
$(x_n)_n\to y$. As $f$ is quasiconcave, for every $\lambda\in[0,1]$ one has
$$1= f(0)\geq\lim_{n\to\infty}f(\lambda x_n)\geq \lim_{n\to\infty}f(x_n)=1,$$
so for every $\lambda y$ with $0\leq\lambda\leq 1$ there is some sequence
that converges to $\lambda y$ and whose image through $f$ converges to 1.
As $t\in[0,\infty)\mapsto f(ty)\in[0,1]$ is a nonincreasing function, 
it is continuous almost everywhere, in particular it is continuous at some 
$\lambda\in(1/2,1)$. Then, $f(\lambda y)<1=f(0)$ and Proposition~\ref{continuaa} 
imply that $f$ is continuous at $\lambda y$. This means that 
$f(\lambda y)=\lim_{n\to\infty}f(\lambda x_n)=1$ so, in any 
case, $f(\lambda y)=1$. This implies that $f(\mu y)=1$ 
whenever $0\leq\mu\leq 1/2<\lambda$. In terms of $N$, this means that 
$1=N(\mu y,1)=N(y,1/\mu)=N(y,t)$ for every $t=1/\mu\geq 2$. Choose $s>0$ 
such that $N(y,s)<1$, such $s$ exists because of (N2). It is clear that 
$\lim_{n\to \infty}N(x_n,t)=1,\,\forall\, t>0$ is equivalent to 
$\lim_{n\to \infty}N(2x_n/s,2t/s)=1,\,\forall\, t>0$ and to 
$\lim_{n\to \infty}N(2x_n/s,t)=1,\,\forall\, t>0$. Now, the previous argument 
shows that $N(y,s)=N(2y/(2s),1)=1,$ a contradiction with the choice of $s$. 
This way, we obtain that $(x_n)_n$ must converge to 0.

As for the second case, suppose that $(x_n)_n$ has no accumulation point. 
We only need to choose a bounded neighbourhood $B$ of 0
and take into account that, for every $n$ such that $x_n\not\in B$, there is
$0<\lambda_n<1$ such that $\lambda_n x_n\in B\setminus \left(\frac 12B\right)$.
As $f$ is quasiconcave, $f(\lambda_nx_n)\geq\min\{f(0),f(x_n)\}=f(x_n)$
so we can substitute each $x_n$ by $\lambda_nx_n$ and apply the previous case.

So, whenever the fuzzy limit of a sequence is 0, the sequence converges and
its limit is 0.

To show that the other implication also holds we just need to observe that 
$\lim(x_n)_n=0$ implies $\lim (f(x_n))_n=1$ because $f(0)=1$ and, by the 
previous Lemma, $f$ is continuous at $0$. This gives $\lim N(x_n,1)=1$ and for 
any $t_0>0$ we can apply the same argument to $(f(x_n/t_0))_n$. 
\end{proof}

\begin{theorem}\label{thmcontinuous}
Let $(X,N)$ be a finite-dimensional FNLS and $t_0\geq 0$. Then, the fuzzy 
continuity of $N(\,\cdot\,,t_0)$ is equivalent to its continuity. In 
particular, it is always continuous at 0 and, if $N(x_0,t_0)<1$, then it 
is continuous at $x_0\neq 0$ if and only if $t\mapsto N(t,\,\cdot\,)$ 
is continuous at $t_0$. 
\end{theorem}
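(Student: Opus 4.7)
My plan has three components corresponding to the three claims of the theorem. \emph{First}, to obtain the equivalence of fuzzy and usual continuity at a point $x_0$, I would invoke Theorem~\ref{thmfscc} to replace fuzzy continuity of $N(\,\cdot\,,t_0):(X,N)\to(\K,N_\K)$ by sequential fuzzy continuity. On the domain, Proposition~\ref{sequence} applied to $y_n:=x_n-x_0$ identifies fuzzy convergence $x_n\to x_0$ with convergence in the only linear topology on the finite-dimensional space $X$. On the codomain, since $N_\K$ takes only the values $0$ and $1$, the condition $\lim_n N_\K(N(x_n,t_0)-N(x_0,t_0),t)=1$ for every $t>0$ is equivalent to $|N(x_n,t_0)-N(x_0,t_0)|\to 0$, i.e.\ usual convergence in $\R$. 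Because $X$ is metrisable, sequential continuity coincides with continuity, and the equivalence of fuzzy and usual continuity of $N(\,\cdot\,,t_0)$ at $x_0$ follows.

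\emph{Second}, continuity of $N(\,\cdot\,,t_0)$ at $0$ is precisely the immediately preceding Lemma. For the \emph{third} claim, the characterisation at $x_0\neq 0$ with $N(x_0,t_0)<1$, I would apply Proposition~\ref{continuaa} to $f:=N(\,\cdot\,,t_0)$ with $a=x_0$ and $b=0$. Its hypotheses are met: $f$ is quasiconcave (being $f_N(\,\cdot\,/t_0)$ with $f_N$ the function associated to $N$ in Theorem~\ref{QCF}), $f(b)=N(0,t_0)=1>N(x_0,t_0)=f(a)$, and $f$ is continuous at $0$ by the Lemma. Using (N3),
$$f\bigl(a+t(a-b)\bigr)=N((1+t)x_0,t_0)=N\bigl(x_0,\,t_0/(1+t)\bigr),$$
so via the substitution $s=t_0/(1+t)$, continuity of $s\mapsto N(x_0,s)$ at $s=t_0$ is exactly continuity of $t\mapsto f(a+t(a-b))$ at $t=0$, and Proposition~\ref{continuaa} delivers continuity of $N(\,\cdot\,,t_0)$ at $x_0$. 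The converse direction follows directly from (N3): $N(x_0,s)=N((t_0/s)x_0,t_0)$, and $(t_0/s)x_0\to x_0$ as $s\to t_0$, so continuity of $N(\,\cdot\,,t_0)$ at $x_0$ forces continuity of $s\mapsto N(x_0,s)$ at $t_0$.

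The main subtlety lies in the first step: one must carefully translate between fuzzy and usual notions of convergence on both the domain and the codomain, exploiting Proposition~\ref{sequence} on the domain and the two-valued nature of $N_\K$ on the codomain, and then invoke metrisability of finite-dimensional spaces to move from sequential continuity to continuity. Once this equivalence is established, Proposition~\ref{continuaa} together with the homogeneity axiom (N3) combine cleanly to yield the remaining characterisation.
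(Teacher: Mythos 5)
Your proposal is correct and follows essentially the same route as the paper: Theorem~\ref{thmfscc} plus Proposition~\ref{sequence} to identify fuzzy and sequential (hence usual) continuity, and then Proposition~\ref{continuaa} applied along the line through $x_0$ and $0$, with (N3) translating $t\mapsto f((1+t)x_0)$ into $s\mapsto N(x_0,s)$. In fact you supply details the paper leaves implicit, notably the two-valuedness of $N_\K$ on the codomain and the easy converse of the third claim.
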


\begin{proof}
By Theorem~\ref{thmfscc} we know that $N(\,\cdot\,,t_0)$ is fuzzy continuous 
if and only if it is sequentially fuzzy continuous. It is a well-known fact that 
this also happens with continuity and sequential continuity in metric spaces, 
so it suffices to show that sequential continuity and fuzzy sequential 
continuity are equivalent, but this is immediate from Proposition~\ref{sequence}. 

Now, applying Proposition~\ref{continuaa} we obtain that $f(x)=N(x,1)$ is fuzzy 
continuous at $x_0$ if and only if $t\mapsto f(tx_0)$ is continuous at $t=1$. 
The general case for an arbitrary $t_0\in(0,\infty)$ is analogous, so we are done. 
\end{proof}

\begin{remark}
Observe that if we consider $(\R,N_\R)$ and the associated quasiconcave function 
$$f_\R(x)=N_\R(x,1)=
\left\{
\begin{array}{c}
0 \text{ when }1\leq |x|\\
1 \text{ when }1>|x|
\end{array}
\right.$$
$f_\R$ is not (fuzzy) continuous at $\pm 1$ and neither is 
$N_\R(\,\cdot\,,t_0)$ at $\pm t_0$ with $t_0>0$. 

On the other hand, if Condition (N7) holds then $N$ is continuous 
$N(x_0,t_0)$ whenever $N(x_0,t_0)<1$, provided the space is finite-dimensional. 
\end{remark}

\subsection{Decomposition Theorem}
After a thorough reading of~\cite{bag2005bounded} and \cite{nadaban}, we have 
realized that the Condition (N6) in the Decomposition Theorem that appears as 
Theorem~2.2~in~\cite{bag2003finite} is unnecessarily restrictive. This can be 
seen as a consequence of the intuition that quasiconcave functions give rather 
than a consequence of any results on this kind of functions. Namely, it is 
well-known that a real function $f:X\to\R$ is quasiconcave  
if and only if every $S(f,\lambda)=\{x\in X:f(x)\geq \lambda\}$ 
is convex. As the function $f_N$ always fulfils $f_N(x)=f_N(-x)$, these 
epigraphs are also symmetric. The only that each $\Epi(f,\lambda)$ needs to fulfil 
in order to define a norm via its Minkowsky functional is that it is bounded 
and absorbing and these properties are easily characterized by means of the 
behaviour of $f(tx)$ when $t\to 0$ and $t\to\infty$. Summarizing, we have 
the following result, that generalises Theorem~2.2~in~\cite{bag2003finite}. 

\begin{theorem}[Decomposition Theorem,~\cite{bag2003finite}]\label{WeDeco}
Let $(X, N)$ be an FNLS. Assume further that,
\begin{itemize}
\item[(N6$'$)] For every $x\in X$, $N(x,t)$ converges to $0$ when $t\to 0$ 
\end{itemize}
and define for each $0<\alpha<1$ the function
\begin{equation*}
p_\alpha(x)=\inf\{t>0:N(x,t)>\alpha\}=\inf\{t>0:f_N(x/t)>\alpha\}. 
\end{equation*}
Then $\mathcal{P}=\{p_\alpha:\alpha\in(0,1)\}$ is an ascending family of norms 
on $X$. 
\end{theorem}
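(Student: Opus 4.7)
My plan is to verify, for each fixed $\alpha\in(0,1)$, the three defining properties of a norm for $p_\alpha$ (plus finiteness of the infimum), and then the ascending character of the family, by translating each of (N1)--(N5) and the new hypothesis (N6$'$) through the infimum. Most of the verification is a routine reformulation of (N1)--(N5); the delicate point that truly requires (N6$'$) is the non-degeneracy.

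For the routine ingredients: $p_\alpha(x)$ is well-defined and finite because (N5) yields $N(x,t)\to 1$ as $t\to\infty$, so the defining set is non-empty. Since $N(0,t)=1$ for every $t>0$ (a consequence of (N2)), $p_\alpha(0)=0$. Positive homogeneity $p_\alpha(\lambda x)=|\lambda|p_\alpha(x)$ for $\lambda\neq 0$ follows from (N3) via the change of variable $s=t/|\lambda|$. For the triangle inequality I would fix $\varepsilon>0$, choose $t'>p_\alpha(x)$ and $s'>p_\alpha(y)$ with $t'+s'<p_\alpha(x)+p_\alpha(y)+\varepsilon$; the non-decreasing monotonicity of $N(x,\cdot)$ and $N(y,\cdot)$ observed in the excerpt forces $N(x,t')>\alpha$ and $N(y,s')>\alpha$, so (N4) yields $N(x+y,t'+s')\geq\min\{N(x,t'),N(y,s')\}>\alpha$, whence $p_\alpha(x+y)\leq t'+s'$, and letting $\varepsilon\to 0$ closes this step.

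The main obstacle is the non-degeneracy $p_\alpha(x)=0\Rightarrow x=0$, and this is exactly where (N6$'$) enters. If $p_\alpha(x)=0$, then arbitrarily small positive $t$ satisfy $N(x,t)>\alpha$; the monotonicity of $N(x,\cdot)$ propagates this to \emph{every} $s>0$, since for any such $s$ one can pick $t\in(0,s)$ with $N(x,t)>\alpha$, whence $N(x,s)\geq N(x,t)>\alpha$. Sending $s\to 0^+$ and invoking (N6$'$) gives $\alpha\leq \lim_{s\to 0^+}N(x,s)=0$, contradicting $\alpha>0$; hence $x=0$. This is precisely where (N6$'$) is strictly less restrictive than the classical (N6) of \cite{bag2003finite}: the Minkowski-functional argument only requires the limit to vanish, not that $N(x,\cdot)$ be exactly zero on some initial interval. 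Finally, the family is ascending because $\alpha\leq\beta$ forces $\{t>0:N(x,t)>\beta\}\subseteq\{t>0:N(x,t)>\alpha\}$, and the infimum respects this inclusion.
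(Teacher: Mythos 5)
Your proof is correct and follows essentially the same route as the paper's: each norm axiom is read off from (N1)--(N5), with (N6$'$) supplying non-degeneracy and the set inclusion giving the ascending property. The only cosmetic difference is that you derive the triangle inequality directly from (N4) with an explicit $\varepsilon$-argument about the infimum, where the paper invokes the quasiconcavity of $f_N$; your version is in fact slightly more careful in passing from ``$t$ lies in the defining set'' to a statement about $p_\alpha$.
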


\begin{proof}
For the first part of the proof we need to show that each $p_\alpha$ fulfils 
the three conditions of the definition of norm. Namely, 

(i)\quad  
It vanishes exactly at $x=0.$ 
Since $N(0,t)=1$ for every $t>0$, we have that $p_\alpha(0)=0$. 
Conversely, if $p_\alpha(x)=0$ then
$x=0$ because for every $x\neq 0$, $\lim_{t\to 0} N(x,t)=0$ by (N6$'$).

(ii)\quad  
It is positively homogeneous: 
\begin{eqnarray*}
p_\alpha(\lambda x)&=& \inf\{t>0:N(\lambda x,t)>\alpha\}\\
&=& \inf\left\lbrace t>0:N\left(x,\dfrac{t}{|\lambda|}\right)>\alpha\right\rbrace\\
&=& \inf\{|\lambda| t>0:N(x,t)>\alpha\}\\
&=& |\lambda| \inf\{ t>0:N(x,t)>\alpha\}\\
&=& |\lambda|p_\alpha(x).
\end{eqnarray*}

(iii)\quad  
And fulfils the triangle inequality: \\ 
If $f(x/t)>\alpha$ and $f(y/s)>\alpha$, then the definition of quasiconcave function 
gives, for every $\lambda\in[0,1],$ the inequality 
$f(\lambda x/t+(1-\lambda)y/s)>\alpha$. With $\lambda=t/(t+s)$ we obtain
$$\alpha<f\left(\frac{t}{t+s} x/t+\frac{s}{t+s}y/s\right)=f((x+y)/(t+s)).$$
So, $p_\alpha(x)\leq t$ and $p_\alpha(y)\leq s$ imply $p_\alpha(x+y)\leq t+s.$

To show that $\mathcal{P}=\{p_\alpha\}_{0<\alpha<1}$ is an ascending family, 
we just need to observe that, for $\alpha\geq\beta$ one has 
\begin{eqnarray*}
        p_\alpha(x)&  =  & \inf\{t>0:N(x,t)>\alpha\}\\
         &\leq & \inf\{t>0:N(x,t)>\beta\}=p_\beta(x),
\end{eqnarray*}
because $N(x,t)>\beta$ implies $N(x,t)>\alpha$.
\end{proof}

\begin{remark}
The original statement of Theorem~\ref{WeDeco} includes as a hypothesis 
that $N(x,\,\cdot\,)$ must vanish eventually, i.e., that for every $x$ 
there exists $t_x>0$ such that $N(x, t)=0 $ for every $t\leq t_x$. It is written 
in a different way, 
but it is clear that our Condition (N6$'$) is much weaker 
than (N6) and it does not hinder the proof. 
\end{remark}

\subsection{Concluding comments and remarks}
We have seen that the easy observation labelled as Theorem~\ref{QCF} is 
useful when it comes to analyse the continuity of fuzzy norms, and it will 
probably be useful in other aspects of fuzzy functional analysis. 

We think that it is worth pointing out that there is something that does 
not feel right about the choice of $(\K,N_\K)$ as the basic structure amongst 
the fuzzy normed linear spaces, as in the definition of the dual FNLS that 
the reader can find in~\cite[Definition~5.1]{bag2005bounded}. 
Of course, $(\K,N_\K)$ can be seen as the most natural structure 
of FNLS. Indeed, if for any fuzzy norm $N$ we think $N(x,t)$ as the truth 
value of the statement ``the norm of $x$ is less than or equal to the real 
number $t$'' (see, e.g., \cite{MMM}), then $N_\K(x,t)$ equals 1 if $|x|\leq t$ 
and vanishes if $|x|>t$, so the FNLS structure is directly inherited from 
the normed space $(\K,|\,\cdot\,|)$. But, as the fuzzy norm endows the linear 
space with a topological structure, one could expect the fuzzy norm to be 
continuous. The problem that we see with the standard fuzzy norm 
defined as $N_{|\,\cdot\,|}(x,t)=\displaystyle\frac t{t+|x|}$ for every 
$x\in X, t>0$ is that it does not fulfil the Condition (N6), but thanks to 
Theorem~\ref{WeDeco} this Condition is no longer needed in order to get a 
decomposition of $N$ as a family of crisp norms and the fuzzy norm 
$N_{|\,\cdot\,|}$, unlike $N_\K$, fulfils Condition (N7). 

\section*{Acknowledgements}
This work has been partially supported by DGICYT
projects MTM2016-76958-C2-1-P and PID2019-103961GB-C21 (Spain) and 
Junta de Extremadura project IB20038.
The second author was supported by the grant BES-2017-079901 related to the
project MTM2016-76958-C2-1-P.

\bibliographystyle{abbrv}

\bibliography{Fuzzy}

\end{document}